\newtheorem{prop}{Proposition}
\theoremstyle{definition}
\theoremstyle{remark}
\numberwithin{equation}{section}
\newcommand{\mb}{\mathbf}
\newcommand{\mf}{\mathfrak}
\begin{document}
\title[]{The non-symmetric Wilson polynomials are the Bannai--Ito polynomials}
\author[V.X. Genest]{Vincent X. genest}
\address{Centre de recherches math\'ematiques, Universit\'e de Montr\'eal, Montr\'eal, QC, Canada H3C 3J7}
\email{genestvi@crm.umontreal.ca}
\author[L. Vinet]{Luc Vinet}
\address{Centre de recherches math\'ematiques, Universit\'e de Montr\'eal, Montr\'eal, QC, Canada H3C 3J7}
\email{vinetl@crm.umontreal.ca}
\author[A. Zhedanov]{Alexei Zhedanov}
\address{Donetsk Institute for Physics and Technology, Donetsk 83114, Ukraine}
\email{zhedanov@yahoo.com}
\subjclass[2010]{33C80, 20C08}
\date{}
\dedicatory{}
\begin{abstract}
The one-variable non-symmetric Wilson polynomials are shown to coincide with the Bannai--Ito polynomials. The isomorphism between the corresponding degenerate double affine Hecke algebra of type $(C_1^{\vee}, C_1)$ and the Bannai--Ito algebra is established. The Bannai--Ito polynomials are seen to satisfy an ortho\-gonality relation with respect to a positive-definite and continuous measure on the real line. A non-compact form of the Bannai--Ito algebra is introduced and a four-parameter family of its infinite-dimensional and self-adjoint representations is exhibited.
\end{abstract}
\maketitle
\section{Introduction}
The purpose of this paper is fourfold: first, to show that the non-sym\-metric Wilson polynomials coincide with the Bannai--Ito polynomials; second, to identify the corresponding degenerate double affine Hecke algebra of type $(C_1^{\vee}, C_1)$ with the Bannai--Ito algebra; third, to exhibit a positive-definite, continuous ortho\-gonality measure on the real line for the Bannai--Ito polynomials; and also, to present a non-compact form of the Bannai--Ito algebra and display a four-parameter family of its infinite-dimensional, self-adjoint representations. 

We begin by reviewing the essentials of these two families of orthogonal functions and their associated algebraic structures.
\subsection{The Bannai--Ito polynomials and algebra}
The Bannai--Ito (BI) polynomials first appeared in the context of $P-$ and $Q-$ polynomial association schemes. In their book \cite{1984_Bannai&Ito}, E. Bannai and T. Ito identified the polynomials bearing their names as $q=-1$ analogs of the $q$-Racah polynomials in their classification of the orthogonal polynomials satisfying the Leonard duality \cite{1982_Leonard_SIAMJMathAnal_13_656}; see \cite{1990_Bannai} for an overview. 

The (monic)  Bannai--Ito polynomials $\mb{B}_{n}(x)$ of degree $n$ in $x$, which depend on four parameters $\mf{a}, \mf{b}, \mf{c},\mf{d}$ are defined by the three-term recurrence relation\footnote{\noindent With respect to the usual parametrization (e.g \cite{2012_Tsujimoto&Vinet&Zhedanov_AdvMath_229_2123}) where the BI polynomials are denoted by $B_n(y;\rho_1,\rho_2,r_1,r_2)$, we have taken $y=x/2-1/4$, $\rho_1=\mf{a}+1/4$, $\rho_2=\mf{b}+1/4$, $r_1=-\mf{c}-1/4$, $r_2=-\mf{d}-1/4$.}
\begin{align}
\label{BI-3Term}
x\mb{B}_{n}(x)=\mb{B}_{n+1}(x)+(2\mf{a}+1-A_{n}-C_{n})\mb{B}_{n}(x)+A_{n-1}C_{n}\mb{B}_{n-1}(x),
\end{align}
with $\mb{B}_{-1}(x)=0$, $\mb{B}_0(x)=1$ and where the coefficients read
\begin{align*}
A_{n}&=
\begin{cases}
\frac{(n+2\mf{a}+2\mf{c}+2)(n+2\mf{a}+2\mf{d}+2)}{2(n+\mf{a}+\mf{b}+\mf{c}+\mf{d}+2)} & \text{$n$ even}
\\
\frac{(n+2\mf{a}+2\mf{b}+2)(n+2\mf{a}+2\mf{b}+2\mf{c}+2\mf{d}+3)}{2(n+\mf{a}+\mf{b}+\mf{c}+\mf{d}+2)}
& \text{$n$ odd}
\end{cases},
\\
C_{n}&=
\begin{cases}
-\frac{n(n+2\mf{c}+2\mf{d}+1)}{2(n+\mf{a}+\mf{b}+\mf{c}+\mf{d}+1)} & \text{$n$ even}
\\
-\frac{(n+2\mf{b}+2\mf{c}+1)(n+2\mf{b}+2\mf{d}+1)}{2(n+\mf{a}+\mf{b}+\mf{c}+\mf{d}+1)} & \text{$n$ odd}
\end{cases}.
\end{align*}
The polynomials $\mb{B}_{n}(x)$ are eigenfunctions of the most general, self-adjoint, first-order Dunkl shift operator preserving the space of polynomials of a given degree \cite{2012_Tsujimoto&Vinet&Zhedanov_AdvMath_229_2123}. Specifically, they satisfy the eigenvalue equation
\begin{align}
\label{BI-Eigen}
\mathcal{L}\mb{B}_{n}(x)=\lambda_{n}\mb{B}_{n}(x),\qquad \lambda_n=(-1)^{n}(n+\mf{a}+\mf{b}+\mf{c}+\mf{d}+3/2),
\end{align}
where $\mathcal{L}$ is the difference operator given by
\begin{multline}
\label{BI-Operator}
\mathcal{L}=\left(\frac{(x+2\mf{c}+1)(x+2\mf{d}+1)}{2x+1}\right)(T^{+}_{x}R_{x}-\mathbb{1})
\\
-\left(\frac{(x-2\mf{a}-1)(x-2\mf{b}-1)}{2x-1}\right)(T^{-}_{x}R_{x}-\mathbb{1})
+(\mf{a}+\mf{b}+\mf{c}+\mf{d}+3/2)\mathbb{1},
\end{multline}
where $R_{x}f(x)=f(-x)$ is the reflection operator and where $T^{\pm}_{x}f(x)=f(x\pm 1)$ are the discrete shift operators.  The eigenvalue equation \eqref{BI-Eigen}, together with the monicity condition, completely determine the polynomials $\mb{B}_{n}(x)$.  The bispectrality property of $\mb{B}_{n}(x)$, expressed through the relations \eqref{BI-3Term} and \eqref{BI-Eigen}, is encoded in an algebraic structure that has been called the Bannai--Ito algebra \cite{2012_Tsujimoto&Vinet&Zhedanov_AdvMath_229_2123}; this associative algebra is obtained as follows. Let $K_1$, $K_2$ be the generators defined by
\begin{align}
\label{rr-1}
K_1=\mathcal{L},\quad K_2=X,
\end{align}
where $X$ is the ``multiplication by $x$'' operator. Writing the anticommutator as $\{A,B\}=AB+BA$, one has \cite{2012_Tsujimoto&Vinet&Zhedanov_AdvMath_229_2123}
\begin{align}
\label{BI-Algebra}
\{K_1,K_2\}=K_3+\omega_3,\quad \{K_2,K_3\}=K_1+\omega_1,\quad \{K_3,K_1\}=K_2+\omega_2,
\end{align}
where the first relation of \eqref{BI-Algebra} is taken to define $K_3$. The structure constants $\omega_i$ have the expressions
\begin{align*}
\omega_1&=4(\mf{ab}+\mf{cd})+(\mf{a}+\mf{b}+\mf{c}+\mf{d})+1/2,
\\
\omega_2&=2(\mf{a}^2+\mf{b}^2-\mf{c}^2-\mf{d}^2)+(\mf{a}+\mf{b}-\mf{c}-\mf{d}),
\\
\omega_3&=4(\mf{ab}-\mf{cd})+(\mf{a}+\mf{b}-\mf{c}-\mf{d}).
\end{align*}
The relations  \eqref{BI-Algebra} define the abstract Bannai--Ito algebra. Note that the arbitrary structure constants can be viewed as being associated to a central element. This algebra has a Casimir operator
\begin{align}
\label{BI-Casimir}
Q=K_1^2+K_2^2+K_3^2,
\end{align}
which commutes with all generators $K_i$, $i=1,2,3$. In the realization \eqref{rr-1}, the Casimir operator takes the value
\begin{align}
\label{Cas-Value}
Q=2(\mf{a}^2+\mf{b}^2+\mf{c}^2+\mf{d}^2)+(\mf{a}+\mf{b}+\mf{c}+\mf{d})+1/4.
\end{align}
When the parameters $\mf{a},\mf{b},\mf{c},\mf{d}$ are real, the Bannai--Ito polynomials can only obey a finite orthogonality relation with respect to a discrete, positive measure; see \cite{2013_Genest&Vinet&Zhedanov_SIGMA_9_18} for details. This is related to the fact that for real values of the parameters, the self-adjoint representations of the Bannai--Ito algebra with spectrum \eqref{BI-Eigen}, i.e. those that satisfy the $*$-relations $K_i^{\dagger}=K_i$ for $i=1,2,3$, can only be finite-dimensional. Let us note here that the Bannai--Ito algebra has recently appeared as symmetry algebra for the Laplace--Dunkl \cite{2015_Genest&Vinet&Zhedanov_CommMathPhys_336_243} and Dirac--Dunkl \cite{2015_DeBie&Genest&Vinet_ArXiv_1501.03108} equations on the 2-sphere as well as in the Racah problem of  $\mf{osp}(1|2)$ \cite{2014_Genest&Vinet&Zhedanov_ProcAmMathSoc_142_1545}. Let us also mention that the Bannai--Ito polynomials sit at the top of the emerging ``$-1$'' scheme of orthogonal polynomials, which extends the Askey tableau; see \cite{2012_Tsujimoto&Vinet&Zhedanov_AdvMath_229_2123,2013_Genest&Vinet&Zhedanov_SIGMA_9_18, 2013_Tsujimoto&Vinet&Zhedanov_ProcAmMathSoc_141_959, 2012_Tsujimoto&Vinet&Zhedanov_TransAmerMathSoc_364_5491}.

The Bannai--Ito algebra can be thought of as a $q=-1$ analog of the Askey--Wilson algebra \cite{1991_Zhedanov_TheorMathPhys_89_1146}, which describes the deeper symmetries of the Askey--Wilson polynomials. In two papers \cite{2007_Koornwinder_SIGMA_3_63, 2008_Koornwinder_SIGMA_4_52}, Koornwinder established the relationship between the Askey--Wilson algebra and the double affine Hecke algebra associated with the rank-one root system of type $(C_1^{\vee}, C_1)$. These results relate to the large body of work on multivariate orthogonal polynomials and generalized Calogero-Moser systems associated with Cherednik algebras and root systems; see for example \cite{1995_VanDiejen_CompMath_95_183, 1992_Koornwinder_ContempMath_138, MacDonald-1998, 2000_VanDiejen&Vinet}. In the present paper, we shall put the Bannai--Ito algebra and polynomials in a similar framework by exhibiting their relationship with the degenerate double affine Hecke algebra associated to the rank one root system of type $(C_1^{\vee}, C_1)$ and with the non-symmetric Wilson polynomials introduced by Groenevelt in \cite{2007_Groenevelt_TransGroups_12_77, 2009_Groenevelt_SelMath_15_377}.
\subsection{The one-variable non-symmetric Wilson polynomials and a rank-one, degenerate double affine Hecke algebra}
Let $t_0, t_1$ and $u_0,u_1$ be complex parameters and let $T_0, T_1$ be defined by
\begin{align}
\label{rea-1}
\begin{aligned}
T_0&=\frac{(t_0+u_0-z+1/2)(t_0-u_0-z+1/2)}{1-2z}(T^{-}_{z}R_{z}-\mathbb{1})+t_0 \mathbb{1},
\\
T_1&=\frac{(t_1+u_1+z)(t_1-u_1+z)}{2z}(R_{z}-\mathbb{1})+t_1 \mathbb{1}.
\end{aligned}
\end{align}
It is easy to check that $T_i^2=t_i^2$ for $i=0,1$; hence $T_0$ and $T_1$ are involutions. The non-symmetric Wilson polynomials $p_{n}(z)$ of degree $n$ in $z$, which depend on four parameters $t_0$, $t_1$, $u_0$, $u_1$, are defined as the unique (monic) polynomials satisfying the eigenvalue equation \cite{2007_Groenevelt_TransGroups_12_77}
\begin{align}
\label{Wilson-Eigen}
(T_0+T_1)\,p_{n}(z)=\gamma_{n}p_{n}(z),
\end{align}
where the eigenvalues $\gamma_n$ are given by
\begin{align}
\label{Wilson-Eigen-2}
\gamma_{n}=
\begin{cases}
t_0+t_1+m & n=2m
\\
-(t_0+t_1+m) & n=2m-1
\end{cases}.
\end{align}
The polynomials $p_{n}(z)$ satisfy the complex orthogonality relation \cite{2007_Groenevelt_TransGroups_12_77}
\begin{align}
\label{Ortho}
\frac{1}{2\pi i}\int_{\mathcal{C}}p_{n}(z)p_{m}(z) \,\Delta(z)\,\mathrm{d}z=h_{n}\delta_{nm},
\end{align}
with respect to the weight function
\begin{multline*}
\Delta(z)=
\frac{\Gamma(t_1+u_1+z)\Gamma(t_1+u_1+1-z)\Gamma(t_1-u_1+z)\Gamma(t_1-u_1+1-z)}{\Gamma(2z)\Gamma(1-2z)}
\\
\times \Gamma(t_0+u_0+1/2+z)\Gamma(t_0+u_0+1/2-z)\Gamma(t_0-u_0+1/2+z)\Gamma(t_0-u_0+1/2-z),
\end{multline*}
where $h_{n}\neq 0$ and where $\mathcal{C}$ is the usual contour that runs along the imaginary axis and that is indented, if necessary, to separate the increasing sequence of poles from the decreasing sequence of poles in the weight function; see for example \cite{2001_Andrews&Askey&Roy}. Sufficient conditions on the parameters $t_i$, $u_i$ for this contour to exist are easily found. The algebraic structure associated with the non-symmetric Wilson polynomials is as follows. Introduce the operators $U_0$, $U_1$ defined as \cite{2007_Groenevelt_TransGroups_12_77}
\begin{align}
\label{rea-2}
\begin{aligned}
U_0=-T_0+Z-1/2,\qquad 
U_1=-T_1-Z,
\end{aligned}
\end{align}
where $Z$ is the ``multiplication by $z$'' operator. The degenerate double affine Hecke algebra associated to the non-symmetric Wilson polynomials has generators $T_0$, $T_1$, $U_0$, $U_1$ and relations
\begin{gather}
\label{Algebra}
\begin{gathered}
T_0^2=t_0^2,\quad T_1^2=t_1^2,\quad U_0^2=u_0^2,\quad U_1^2=u_1^2,
\\
T_0+T_1+U_0+U_1=-\frac{1}{2}.
\end{gathered}
\end{gather}
These relations are easily verified in the realization \eqref{rea-1}, \eqref{rea-2}. This algebra can be viewed as a $q=1$ analog of the double affine Hecke algebra of type $(C_1^{\vee}, C_1)$ \cite{2000_Stokman_IntMathResNot_1005, 1999_Sahi_AnnMath_150_267}. The algebra \eqref{Algebra} was also considered in \cite{2007_Etingof&Oblomkov&Rains_AdvMath_212_749}.  In the following, we shall use an elementary generalization \eqref{Algebra} for which $t_i$, $u_i$ for $i=0,1$ are considered as central elements rather than complex parameters; following \cite{2013_Terwilliger_Sigma_9_47}, this could be called the ``universal'' degenerate double affine Hecke algebra of type $(C_1^{\vee}, C_1)$.
\subsection{Outline}
The outline for the rest of the paper is straightforward. In Section 2, the non-symmetric Wilson polynomials are shown to coincide with the Bannai--Ito polynomials by comparison of their respective eigenvalue equations. The isomorphism between the (universal) degenerate double affine Hecke algebra and the Bannai--Ito algebra is given explicitly on the generators. In Section 3, we exhibit a positive-definite continuous orthogonality measure for the BI polynomials. We also display a non-compact form of the BI algebra and specify a four-parameter family of irreducible self-adjoint representations. A conclusion follows.
\section{Bannai--Ito vs. non-symmetric Wilson polynomials}
In this section, the equivalence between the Bannai--Ito polynomials and the non-symmetric Wilson polynomials is demonstrated and the isomorphism between the Bannai--Ito algebra and the (universal) degenerate double affine Hecke algebra is established.
\begin{prop}
Let $p_{n}(z;t_0, t_1, u_0,u_1)$ be the monic non-symmetric Wilson polynomials defined by the eigenvalue equation \eqref{Wilson-Eigen} and let $\mb{B}_{n}(x;\mf{a},\mf{b},\mf{c},\mf{d})$ be the monic Bannai--Ito polynomials defined by \eqref{BI-Eigen}. One has
\begin{align}
(-2)^{n}p_{n}\left(-\frac{x}{2}+\frac{1}{4}; \frac{\mf{c}+\mf{d}}{2}+\frac{1}{4}, \frac{\mf{a}+\mf{b}}{2}+\frac{1}{4}, \frac{\mf{c}-\mf{d}}{2},\frac{\mf{a}-\mf{b}}{2}\right)=\mb{B}_{n}(x;\mf{a},\mf{b},\mf{c},\mf{d}).
\end{align}
Hence the non-symmetric Wilson polynomials coincide with the Bannai--Ito polynomials, up to an affine transformation.
\end{prop}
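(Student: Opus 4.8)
The plan is to use the two uniqueness statements recorded in the excerpt: \eqref{BI-Eigen} together with monicity determines $\mb{B}_n$, while $p_n$ is the unique monic eigenfunction of $T_0+T_1$ in \eqref{Wilson-Eigen}. It therefore suffices to introduce
\[
q_n(x)=(-2)^n\,p_n\!\left(-\tfrac{x}{2}+\tfrac14;\ \tfrac{\mf{c}+\mf{d}}{2}+\tfrac14,\ \tfrac{\mf{a}+\mf{b}}{2}+\tfrac14,\ \tfrac{\mf{c}-\mf{d}}{2},\ \tfrac{\mf{a}-\mf{b}}{2}\right),
\]
to verify that $q_n$ is monic of degree $n$ in $x$, and to show that $q_n$ obeys \eqref{BI-Eigen}; uniqueness then forces $q_n=\mb{B}_n$. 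Monicity is immediate: under $z=-x/2+1/4$ the leading term $z^n$ of the monic $p_n$ becomes $(-1/2)^n x^n$, so the prefactor $(-2)^n$ restores a unit leading coefficient, and $q_n$ is manifestly a polynomial of the correct degree.

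First I would perform the affine, orientation-reversing change of variable $x=1/2-2z$ and pull the operator $T_0+T_1$ of \eqref{rea-1} back to the variable $x$. The key bookkeeping is how the Dunkl-type building blocks transform: a direct computation shows that, on functions re-expressed in $x$, the reflection $R_z$ acts as $T^{-}_x R_x$ while $T^{-}_z R_z$ acts as $T^{+}_x R_x$, so the two terms of $T_0+T_1$ line up precisely with the two terms of $\mathcal{L}$ in \eqref{BI-Operator}. This is the step I expect to be the main obstacle, not because it is deep but because the sign produced by the reflection interacts with the negative Jacobian: one must track the prefactors so that the denominators $1-2z$ and $2z$ turn into $(2x+1)/2$ and $-(2x-1)/2$ with exactly the right signs.

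Next I would insert the stated parameter values, using $t_0+u_0=\mf{c}+1/4$, $t_0-u_0=\mf{d}+1/4$, $t_1+u_1=\mf{a}+1/4$ and $t_1-u_1=\mf{b}+1/4$. The numerator of the $T_0$-coefficient then collapses to $(x+2\mf{c}+1)(x+2\mf{d}+1)/4$ and that of $T_1$ to $(x-2\mf{a}-1)(x-2\mf{b}-1)/4$; combined with the transformed denominators, each coefficient equals exactly one half of the corresponding coefficient in \eqref{BI-Operator}. Collecting constants gives $2(t_0+t_1)+1/2=\mf{a}+\mf{b}+\mf{c}+\mf{d}+3/2$, so the whole computation is summarized by the operator identity $\mathcal{L}=2(T_0+T_1)+\tfrac12\,\mathbb{1}$ valid after the change of variable.

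Finally I would reconcile the spectra. The substitution gives $t_0+t_1=(\mf{a}+\mf{b}+\mf{c}+\mf{d})/2+1/2$, and a short check of the even case $n=2m$ and the odd case $n=2m-1$ in \eqref{Wilson-Eigen-2} shows $2\gamma_n+1/2=\lambda_n$ with $\lambda_n$ as in \eqref{BI-Eigen}. Hence $\mathcal{L}q_n=(2\gamma_n+1/2)q_n=\lambda_n q_n$, so $q_n$ is a monic eigenfunction of $\mathcal{L}$ with the correct eigenvalue, and the uniqueness quoted above yields $q_n=\mb{B}_n$, which is the claimed identity.
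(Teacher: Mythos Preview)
Your proposal is correct and follows essentially the same route as the paper's proof: both argue by uniqueness of monic eigenfunctions, perform the affine change of variable $z=-x/2+1/4$ with the indicated reparametrization, observe that $R_z\to T_x^{-}R_x$ and $T_z^{-}R_z\to T_x^{+}R_x$, and then match the spectra via $2\gamma_n+\tfrac12=\lambda_n$. Your write-up is in fact slightly more explicit than the paper's, since you spell out the monicity check and the operator identity $\mathcal{L}=2(T_0+T_1)+\tfrac12\,\mathbb{1}$.
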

\begin{proof}
As $p_{n}(z;t_0,t_1,u_0, u_1)$ and $\mb{B}_{n}(x;\mf{a},\mf{b},\mf{c},\mf{d})$ are determined uniquely by their eigenvalue equations \eqref{Wilson-Eigen} and \eqref{BI-Eigen} and the monicity condition, it suffices to show that under the affine transformation and reparametrization
\begin{align}
z\rightarrow -\frac{x}{2}+\frac{1}{4},\quad (t_0, t_1,u_0, u_1)\rightarrow \left(\frac{\mf{c}+\mf{d}}{2}+\frac{1}{4}, \frac{\mf{a}+\mf{b}}{2}+\frac{1}{4}, \frac{\mf{c}-\mf{d}}{2},\frac{\mf{a}-\mf{b}}{2}\right),
\end{align}
the eigenvalue equation \eqref{Wilson-Eigen} gives \eqref{BI-Eigen}. Under the above affine transformation, it is easily checked that one has
\begin{align*}
T_{z}^{-}R_{z}\rightarrow T_{x}^{+}R_{x},\qquad R_{z}\rightarrow T_{x}^{-}R_{x},
\end{align*}
and hence that \eqref{Wilson-Eigen} becomes
\begin{multline*}
\Bigg[\frac{(x+2\mf{c}+1)(x+2\mf{d}+1)}{2x+1}(T_{x}^{+}R_{x}-\mathbb{1})+(\mf{a}+\mf{b}+\mf{c}+\mf{d}+1)\mathbb{1}
\\
+\frac{(x-2\mf{a}-1)(x-2\mf{b}-1)}{2x+1}(T_{x}^{-}R_{x}-\mathbb{1})\Bigg]\,p_{n}\left(-\frac{x}{2}+\frac{1}{4}\right)=2\,\gamma_{n}\,p_{n}\left(-\frac{x}{2}+\frac{1}{4}\right).
\end{multline*}
Upon adding $(1/2)\mathbb{1}$ on both sides of the above equation and using the expression \eqref{Wilson-Eigen-2} for the eigenvalues, one finds \eqref{BI-Eigen}.
\end{proof}
\begin{prop}
The Bannai--Ito algebra with defining relations \eqref{BI-Algebra} is isomorphic to the (universal) degenerate double affine Hecke algebra \eqref{Algebra}.
\end{prop}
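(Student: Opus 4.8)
The plan is to construct an explicit pair of mutually inverse algebra homomorphisms between the two structures, reading the correct formulas off the common operator realization and then promoting them to the universal (central-parameter) setting. By Proposition 1 and the substitution dictionary $T^{-}_z R_z \to T^{+}_x R_x$, $R_z \to T^{-}_x R_x$, $Z \to \tfrac14 - \tfrac12 X$ established there, both algebras act on the same space of polynomials, and I will use this to guess the maps and to check relations as operator identities.

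\emph{Forward map.} I would define $\varphi$ from the Bannai--Ito algebra to the degenerate DAHA on generators by
\begin{align*}
\varphi(K_1)=2(T_0+T_1)+\tfrac12,\qquad \varphi(K_2)=2(T_1+U_1)+\tfrac12,
\end{align*}
with $\varphi(K_3)$ forced by the defining relation $K_3=\{K_1,K_2\}-\omega_3$. The first equality is exactly the identification $\mathcal L=2(T_0+T_1)+\tfrac12$ coming out of Proposition 1, and the second is $X=\tfrac12-2Z$ together with $Z=-(T_1+U_1)$ from \eqref{rea-2}. The remaining two Bannai--Ito relations $\{K_2,K_3\}=K_1+\omega_1$ and $\{K_3,K_1\}=K_2+\omega_2$ then have to be verified; I would check them as operator identities in \eqref{rea-1}--\eqref{rea-2}, where they hold by \cite{2012_Tsujimoto&Vinet&Zhedanov_AdvMath_229_2123}, and read off that the structure constants come out as the stated $\omega_i$ and the Casimir as \eqref{Cas-Value}, now expressed through $t_i^2,u_i^2$.

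\emph{Inverse map.} The geometric content is that $T_0$ and $T_1$ are precisely the two shift--reflection ``halves'' of $\mathcal L=K_1$: in the realization one finds $T_0=\tfrac12 f_{+}(T^{+}_xR_x-\mathbb{1})+t_0$ and $T_1=\tfrac12 f_{-}(T^{-}_xR_x-\mathbb{1})+t_1$, where $f_\pm$ are the two coefficients appearing in \eqref{BI-Operator}. Using the elementary rules $\{T^{+}_xR_x,X\}=-T^{+}_xR_x$ and $\{T^{-}_xR_x,X\}=+T^{-}_xR_x$, the operator $K_3=\{K_1,X\}-\omega_3$ flips the sign of the $T^{+}_xR_x$ part relative to $K_1$ while preserving that of the $T^{-}_xR_x$ part; hence $K_1\mp K_3$ isolate $f_{+}T^{+}_xR_x$ and $f_{-}T^{-}_xR_x$ respectively, and solving gives
\begin{align*}
\psi(T_0)=\tfrac14(K_1-K_3)+P_0(K_2),\qquad \psi(T_1)=\tfrac14(K_1+K_3)+P_1(K_2),
\end{align*}
with the companions $\psi(U_0)=-\psi(T_0)+(\tfrac14-\tfrac12 K_2)-\tfrac12$ and $\psi(U_1)=-\psi(T_1)-(\tfrac14-\tfrac12 K_2)$ dictated by \eqref{rea-2}. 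The $P_i$ are quadratic: a priori the reconstruction produces the rational coefficients $f_\pm$ (with poles at $x=\pm\tfrac12$), but the combinations $-\tfrac14(1\mp 2X)m_1-\tfrac12 f_\pm$, where $m_1$ is the multiplication part of $\mathcal L$, telescope to $\mp\tfrac14(x+2\mf c+1)(x+2\mf d+1)\pm\cdots$, i.e.\ to honest polynomials in $X=K_2$. Thus all four involutions genuinely lie in the algebra generated by $K_1,K_2,K_3$.

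The main obstacle is the well-definedness of $\psi$: one must check that $\psi(T_0)^2,\psi(T_1)^2,\psi(U_0)^2,\psi(U_1)^2$ are central and equal to the prescribed scalars $t_0^2,t_1^2,u_0^2,u_1^2$, and that the four images sum to $-\tfrac12$. The sum relation is immediate by construction, but squaring $\tfrac14(K_1\mp K_3)+P_i(K_2)$ produces cubic expressions in $K_1,K_2,K_3$ that must collapse, via the relations \eqref{BI-Algebra}, to the central combination of $\omega_1,\omega_2,\omega_3$ and the Casimir $Q$ that equals $t_i^2$ (respectively $u_i^2$) under the reparametrization of Proposition 1; this is where the involutivity $T_i^2=t_i^2$ of \eqref{rea-1} is reproved intrinsically. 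Once both $\varphi$ and $\psi$ are shown to respect relations, I would finish by checking that $\varphi\circ\psi$ and $\psi\circ\varphi$ act as the identity on generators---immediate from the formulas for $\varphi(K_1),\varphi(K_2)$ and the matching split of $K_3$---so that $\varphi$ is the desired isomorphism.
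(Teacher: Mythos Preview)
Your overall strategy---build explicit maps in both directions and check they are mutual inverses---is the paper's approach, and your forward map $\varphi(K_1)=2(T_0+T_1)+\tfrac12$, $\varphi(K_2)=2(T_1+U_1)+\tfrac12$ agrees with the paper's (after using $T_0+T_1+U_0+U_1=-\tfrac12$ to rewrite $-2T_0-2U_0-\tfrac12$). Two points deserve correction.

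First, your inverse map is simpler than you claim. When you isolate $T_0$ via $\tfrac14(K_1-K_3)$ and collect the multiplication part, you get
\[
P_0(x)=-\tfrac14(2x+1)f_+(x)+\tfrac14(2x-1)f_-(x)+\tfrac12 m_1 x+\text{const},
\]
and since $(2x+1)f_+(x)=(x+2\mf c+1)(x+2\mf d+1)$ and $(2x-1)f_-(x)=(x-2\mf a-1)(x-2\mf b-1)$ both have leading term $x^2$, the quadratic pieces cancel; $P_0$ is \emph{linear}, namely $P_0(K_2)=-\tfrac14 K_2-\tfrac18$. The same happens for $T_1,U_0,U_1$, and one lands exactly on the paper's closed formulas
\[
\widetilde T_0=\tfrac14(K_1-K_2-K_3-\tfrac12),\quad \widetilde T_1=\tfrac14(K_1+K_2+K_3-\tfrac12),\quad \widetilde U_0=\tfrac14(-K_1-K_2+K_3-\tfrac12),\quad \widetilde U_1=\tfrac14(-K_1+K_2-K_3-\tfrac12).
\]
With these linear expressions, the verification that $\widetilde T_i^2,\widetilde U_i^2$ are central is a short computation using only \eqref{BI-Algebra}; each square equals $\tfrac1{16}(Q\pm\omega_1\pm\omega_2\pm\omega_3+\tfrac14)$.

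Second, there is a genuine gap in your verification strategy for $\varphi$. Checking the Bannai--Ito relations for $\varphi(K_i)$ ``as operator identities in \eqref{rea-1}--\eqref{rea-2}'' only shows they hold in that particular module; it does not show $\varphi$ is a well-defined homomorphism of the \emph{universal} algebras, because the realization could satisfy additional relations not implied by \eqref{Algebra}. You must instead compute $\{A_1,A_2\}$, $\{A_2,A_3\}$, $\{A_3,A_1\}$ using only $T_i^2=t_i^2$, $U_i^2=u_i^2$, and the sum relation. This is exactly what the paper does (``by a direct calculation''), and it is short once you have the linear formulas.
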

\begin{proof}
Let $K_1$, $K_2$, $K_3$ satisfy the Bannai--Ito algebra \eqref{BI-Algebra} with arbitrary structure constants or central elements $\omega_1$, $\omega_2$, $\omega_3$ and consider the following combinations
\begin{alignat}{2}
\label{Alg-Map}
\begin{aligned}
\widetilde{T}_0&=\frac{1}{4}\left(K_1-K_2-K_3-\frac{1}{2}\right),\quad& \widetilde{T}_1&=\frac{1}{4}\left(K_1+K_2+K_3-\frac{1}{2}\right),
\\
\widetilde{U}_0&=\frac{1}{4}\left(-K_1-K_2+K_3-\frac{1}{2}\right),\quad& \widetilde{U}_1&=\frac{1}{4}\left(-K_1+K_2-K_3-\frac{1}{2}\right).
\end{aligned}
\end{alignat}
It is easily verified that each of $\widetilde{T}_0^2$, $\widetilde{T}_1^2$, $\widetilde{U}_0^2$ $\widetilde{U}_1^2$ is central; i.e that they commute with all generators $\widetilde{T}_0$, $\widetilde{T}_1$, $\widetilde{U}_0$, $\widetilde{U}_1$. Indeed, one has
\begin{alignat*}{2}
\widetilde{T}_0^2&=\frac{1}{16}\left(Q+\omega_1-\omega_2-\omega_3+\frac{1}{4}\right)=\widetilde{t}_0,\;&
\widetilde{T}_1^2&=\frac{1}{16}\left(Q+\omega_1+\omega_2+\omega_3+\frac{1}{4}\right)=\widetilde{t}_1,
\\
\widetilde{U}_0^2&=\frac{1}{16}\left(Q-\omega_1-\omega_2+\omega_3+\frac{1}{4}\right)=\widetilde{u}_0,\;&
\widetilde{U}_1^2&=\frac{1}{16}\left(Q-\omega_1+\omega_2-\omega_3+\frac{1}{4}\right)=\widetilde{u}_1.
\end{alignat*}
Moreover, it is obvious that
\begin{align*}
\widetilde{T}_0+\widetilde{T}_1+\widetilde{U}_0+\widetilde{U}_1=-1/2.
\end{align*}
Hence \eqref{Alg-Map} provides an algebra map from the Bannai--Ito algebra to the universal degenerate double affine Hecke algebra \eqref{Algebra}. Let us now exhibit the inverse mapping.  Let $T_i$ and $U_i$ for $i=0,1$ satisfy \eqref{Algebra} and consider the linear combinations
\begin{align}
\label{Dompe}
\begin{aligned}
A_1=2T_0+2T_1+1/2,\; A_2=-2T_0-2U_0-1/2,\;
A_3=2T_1+2U_0+1/2.
\end{aligned}
\end{align}
Then by a direct calculation, one finds
\begin{align*}
\{A_1,A_2\}&=A_3+4(t_1^2-t_0^2+u_0^2-u_1^2),
\\
\{A_2,A_3\}&=A_1+4(t_1^2+t_0^2-u_0^2-u_1^2),
\\
\{A_3,A_1\}&=A_2+4(t_1^2-t_0^2-u_0^2+u_1^2).
\end{align*}
Hence the combinations \eqref{Dompe} realize the Bannai--Ito relations \eqref{BI-Algebra}. In the realization \eqref{Dompe}, the Casimir operator of the Bannai--Ito algebra has the expression
\begin{align*}
A_1^2+A_2^2+A_3^2=4(t_0^2+t_1^2+u_0^2+u_1^2)-1/4.
\end{align*}
Hence the Bannai--Ito algebra \eqref{BI-Algebra} and the degenerate double affine Hecke algebra \eqref{Algebra} are isomorphic.
\end{proof}
\section{``Continuous'' Bannai--Ito polynomials and a non-compact form of the Bannai--Ito algebra}
In this section, we exhibit a positive-definite orthogonality measure on the real line for the Bannai--Ito polynomials. We relate the existence of this measure to a non-compact form of the BI algebra and present a four-parameter family of irreducible representations stemming from the BI recurrence relation.

Consider the modified Bannai--Ito polynomials, denoted by $\mb{Q}_{n}(x;\mf{a},\mf{b},\mf{c},\mf{d})$, that are obtained by taking
\begin{align}
\label{Modified-BI}
\mb{Q}_{n}(x;\mf{a},\mf{b},\mf{c},\mf{d})=(-i)^{n}\mb{B}_{n}(i z;\mf{a},\mf{b},\mf{c},\mf{d}).
\end{align}
It follows from \eqref{BI-3Term} that the polynomials $\mb{Q}_{n}(x)$ satisfy a three-term recurrence relation of the form
\begin{align}
\label{Q-Recu}
x\mb{Q}_{n}(x)=\mb{Q}_{n+1}(x)+c_{n}\mb{Q}_{n}(x)+u_{n}\mb{Q}_{n}(x).
\end{align}
where the recurrence coefficients are given by
\begin{align*}
c_n=-i(2\mf{a}+1-A_{n}-C_{n}),\qquad u_{n}=-A_{n-1}C_{n}.
\end{align*}
It is verified that the polynomials $\mb{Q}_{n}(x;\mf{a},\mf{b},\mf{c},\mf{d})$ enjoy the symmetries 
\begin{gather*}
\begin{gathered}
\mb{Q}_{n}(x;\mf{a},\mf{b},\mf{c},\mf{d})=\mb{Q}_{n}(x;\mf{b},\mf{a},\mf{c},\mf{d}),\quad 
\mb{Q}_{n}(x;\mf{a},\mf{b},\mf{c},\mf{d})=\mb{Q}_{n}(x;\mf{a},\mf{b},\mf{d},\mf{c}),
\\
\mb{Q}_{n}(x;\mf{a},\mf{b},\mf{c},\mf{d})=(-1)^{n}\mb{Q}_{n}(-x;\mf{c},\mf{d},\mf{a},\mf{b}).
\end{gathered}
\end{gather*}
Assume now that the parameters $\mf{a}$, $\mf{b}$, $\mf{c}$ and $\mf{d}$ are such that
\begin{align}
\label{CC}
\bar{\mf{a}}=\mf{c}\,\text{or}\,\mf{d},\qquad \bar{\mf{b}}=\mf{d}\,\text{or}\,\mf{c},
\end{align}
where $\bar{x}$ stands for complex conjugation. This condition can be implemented, for example, by taking
\begin{align}
\label{para-1}
\mf{a}=\alpha +i \beta,\quad \mf{b}=\gamma+i\delta,\quad \mf{c}=\alpha-i\beta,\quad \mf{d}=\gamma-i\delta,
\end{align}
where $\alpha$, $\beta$, $\gamma$, $\delta$ are real parameters. Under the parametrization \eqref{para-1}, it is directly verified that the recurrence coefficients $c_{n}$ are of the form
\begin{align}
\label{Cn}
c_{n}=
\begin{cases}
2\beta-\frac{(n+4\alpha+2)(\beta-\delta)}{(n+2\alpha+2\gamma+2)}-\frac{n(\beta+\delta)}{(n+2\alpha+2\gamma+1)} & $\text{$n$} even$
\\
2\beta-\frac{(n+4\alpha+4\gamma+3)(\beta+\delta)}{n+2\alpha+2\gamma+2}-\frac{(n+4\gamma+1)(\beta-\delta)}{n+2\alpha+2\gamma+1} & $\text{$n$} odd$
\end{cases},
\end{align}
and hence real for all $n=0,1,\ldots$. The coefficients $u_{n}$ are of the form
\begin{align}
\label{Un}
u_{n}=
\begin{cases}
\frac{n(n+4\alpha+4\gamma+2)\,\rVert n+2[\alpha+\gamma+i(\beta+\delta)]+1 \rVert^2}{4(n+2\alpha+2\gamma+1)^2} & \text{$n$ even}
\\
\frac{(n+4\alpha+1)(n+4\gamma+1)\,\rVert n+2[\alpha+\gamma+i(\beta-\delta)] +1\rVert^2}{4(n+2\alpha+2\gamma+1)^2} & \text{$n$ odd}
\end{cases}.
\end{align}
It is seen that if $\alpha$, $\beta$, $\gamma$, $\delta$ are positive, then $u_{n}>0$ for all $n=0,1,\ldots$. It hence follows from general theory that the polynomials $\mb{Q}_{n}(x)$ form an infinite family of orthogonal polynomials with respect to a positive-definite measure \cite{Ismail-2009}.
\begin{prop}
Let $\mf{a}$ and $\mf{b}$ be complex numbers with positive real and imaginary parts and let $\mf{c}=\bar{\mf{a}}\text{ or } \bar{\mf{b}}$ and $\mf{d}=\bar{\mf{b}} \text{ or } \bar{\mf{a}}$. The Bannai--Ito polynomials $\mb{B}_{n}(x;\mf{a},\mf{b},\mf{c},\mf{d})$ satisfy the orthogonality relation
\begin{align}
\frac{1}{4\pi}\int_{-\infty}^{\infty}W(z)\,\mb{B}_{n}(z)\,\mb{B}_{m}(z)\;\mathrm{d}z=h_{0}\,\delta_{nm}\,\prod_{k=1}^{n}u_{k},
\end{align}
where the positive weight function $W(z)$ is given by
\small
\begin{align*}
W(z)=\Bigg\rvert \frac{\Gamma(\mf{a}+iz/2+1)\Gamma(\mf{b}+iz/2+1)\Gamma(\mf{c}+iz/2+1/2)\Gamma(\mf{d}+iz/2+1/2)}{\Gamma(1/2+iz)} \Bigg\rvert^2,
\end{align*}
\normalsize
and where $h_{0}$ reads
\small
\begin{align*}
h_{0}=\frac{\Gamma(\mf{a}+\mf{b}+3/2)\Gamma(\mf{a}+\mf{c}+1)\Gamma(\mf{b}+\mf{c}+1)\Gamma(\mf{a}+\mf{d}+1)\Gamma(\mf{b}+\mf{d}+1)\Gamma(\mf{c}+\mf{d}+3/2)}{\Gamma(\mf{a}+\mf{b}+\mf{c}+\mf{d}+2)}.
\end{align*}
\normalsize
\end{prop}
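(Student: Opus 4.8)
The plan is to transplant the complex orthogonality relation \eqref{Ortho} of the non-symmetric Wilson polynomials onto the real line, using the identification of Proposition 1 together with the reparametrization \eqref{para-1}. First I would recall that Proposition 1 gives $\mb{B}_{n}(x)=(-2)^{n}p_{n}(-x/2+1/4)$ under the stated dictionary between $(t_0,t_1,u_0,u_1)$ and $(\mf{a},\mf{b},\mf{c},\mf{d})$; combined with the definition \eqref{Modified-BI} this expresses the continuous polynomials through $\mb{B}_{n}(iz)=i^{n}\mb{Q}_{n}(z)$, i.e.\ as a multiple of $p_{n}$ evaluated on the imaginary axis of the Wilson variable, where the integrand $W\,\mb{Q}_{n}\mb{Q}_{m}$ is real. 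Substituting the dictionary into the Wilson weight $\Delta$ then shows that along the line on which the Wilson variable equals $1/4-iz/2$ one has $\Delta=W(z)$; moreover the reality conditions \eqref{CC} are exactly what turn the product of the eight Gamma factors into the manifestly positive modulus-squared expression for $W$, since each factor becomes paired with the Gamma function of its complex-conjugate argument.

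Next I would address the contour. The relation \eqref{Ortho} is taken along the imaginary axis of the Wilson variable, which under the affine map corresponds to the vertical line $\mathrm{Re}=1/2$ in the Bannai--Ito variable rather than to the imaginary axis that the substitution $x=iz$ requires. I would therefore check that, under \eqref{para-1} with positive real and imaginary parts, the poles of $\Delta$ coming from the ``left'' Gamma factors all have real part strictly less than $-1/4$ while those from the ``right'' factors all have real part strictly greater than $3/4$, so the separating strip is singularity-free (the denominator contributes only zeros there). Since $\Delta$ decays rapidly on vertical lines by Stirling's formula, the contour may be shifted across this strip without changing the value of the integral. After the shift, the change of variables from the Wilson variable to the real variable $z$ produces the Jacobian and orientation factors that combine into the prefactor $1/(4\pi)$ together with powers of $(-2)$ and $i$; for $n=m$ these collect to $(-4)^{-n}$, the factor relating the Wilson norms to the real-line norms.

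At this point orthogonality and the $\delta_{nm}$ structure follow immediately, and the right-hand side $h_0\prod_{k=1}^{n}u_k$ is then forced: for monic polynomials obeying the three-term recurrence \eqref{Q-Recu} with coefficients \eqref{Un}, the squared norm is automatically the total mass times $\prod_{k=1}^{n}u_k$, with positivity of the $u_k$ (hence of the measure) already established before the statement. The only quantity left to pin down is the total mass $h_0=\frac{1}{4\pi}\int_{-\infty}^{\infty}W(z)\,\mathrm{d}z$, which by the above coincides with the $n=0$ Wilson normalization $h_0$ of \eqref{Ortho}.

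I expect the evaluation of $h_0$ to be the main obstacle. After the substitution $w=z/2$ it is a de Branges--Wilson--type Barnes beta integral, but with the nonstandard denominator $\Gamma(1/2+2iw)$ in place of the usual $\Gamma(2iw)$; the half-integer shifts this introduces are precisely what produce the mixed pattern of arguments in the claimed value, namely $\mf{a}+\mf{b}+3/2$ and $\mf{c}+\mf{d}+3/2$ for the ``diagonal'' pairs against $\mf{a}+\mf{c}+1$, $\mf{b}+\mf{d}+1$, etc.\ for the cross pairs. I would obtain it either by substituting the dictionary directly into Groenevelt's explicit Wilson normalization in \cite{2007_Groenevelt_TransGroups_12_77}, or, self-containedly, by reducing the integral to the standard de Branges--Wilson evaluation through the duplication formula $\Gamma(2s)=\pi^{-1/2}2^{2s-1}\Gamma(s)\Gamma(s+1/2)$ applied to the denominator and then matching Gamma factors. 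Careful bookkeeping of the half-integer offsets is what makes this final step delicate.
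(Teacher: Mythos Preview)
Your approach is essentially the same as the paper's: derive orthogonality from the Wilson relation \eqref{Ortho} via Proposition~1 and the reparametrization, obtain the $\prod_{k=1}^{n}u_k$ factor from the general theory of monic orthogonal polynomials, and identify $h_0$ by comparison with the known Wilson integral (either Groenevelt's normalization or the standard de Branges--Wilson evaluation). The paper's proof is a two-line sketch of exactly this strategy, so the contour-shift verification and the duplication-formula bookkeeping you outline are simply the details it leaves to the reader.
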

\begin{proof}
The orthogonality property follow directly from \eqref{Ortho} while the normalization coefficients follow from general theory and from comparison with the Wilson integral \cite{2010_Koekoek_&Lesky&Swarttouw}; see also \cite{2007_Groenevelt_TransGroups_12_77}.
\end{proof}
From \eqref{BI-Eigen}, it follows that the modified Bannai--Ito polynomials $\mb{Q}_{n}(x;\mf{a},\mf{b},\mf{c},\mf{d})$ defined by \eqref{Modified-BI} satisfy the eigenvalue equation
\begin{align}
\label{Q-Eigen}
\mathcal{M}\mb{Q}_{n}(x)=\lambda_{n}\mb{Q}_{n}(x),\qquad \lambda_n=(-1)^{n}(n+\mf{a}+\mf{b}+\mf{c}+\mf{d}+3/2),
\end{align}
where $\mathcal{M}$ is given by
\begin{multline}
\label{OP-M}
\mathcal{M}=\left(\frac{(2\mf{a}+1-ix)(2\mf{b}+1-ix)}{1-2ix}\right)(S_{x}^{+}R_{x}-\mathbb{1})
\\
+\left(\frac{(2\mf{c}+1+ix)(2\mf{d}+1+ix)}{1+2ix}\right)(S_{x}^{-}R_{x}-\mathbb{1})+(\mf{a}+\mf{b}+\mf{c}+\mf{d}+3/2)\mathbb{1},
\end{multline}
where $S_{x}^{\pm}f(x)=f(x\pm i)$ are the imaginary shift operators. It is manifest that under the conditions \eqref{CC}, the eigenvalues $\lambda_n$ in \eqref{Q-Eigen} are real and the operator $\mathcal{M}$ is self-adjoint. The operators $\mathcal{M}$ and $X$ generate a non-compact form of the Bannai--Ito algebra. Indeed, upon defining
\begin{align}
\label{rea-3}
A_1=\mathcal{M},\quad A_2=X,
\end{align}
a direct calculation shows that one has
\begin{align}
\label{NC-BI}
\{A_1,A_2\}=A_3+\alpha_3,\quad \{A_2,A_3\}=-A_1+\alpha_1,\quad \{A_3,A_1\}=A_2+\alpha_2,
\end{align}
where the first relation of \eqref{NC-BI} is taken to define $A_3$ and where the structure constants read
\begin{align}
\label{Struc}
\begin{aligned}
\alpha_1&=-[4(\mf{ab}+\mf{cd})+\mf{a}+\mf{b}+\mf{c}+\mf{d}+1/2],
\\
\alpha_2&=-i[2(\mf{a}^2+\mf{b}^2-\mf{c}^2-\mf{d}^2)+\mf{a}+\mf{b}-\mf{c}-\mf{d}],
\\
\alpha_3&=-i[4(\mf{ab}-\mf{cd})+\mf{a}+\mf{b}-\mf{c}-\mf{d}].
\end{aligned}
\end{align}
When $\mf{c}=\bar{\mf{a}}\text{ or } \bar{\mf{b}}$ and $\mf{d}=\bar{\mf{b}}\text{ or } \bar{\mf{a}}$, the structure constants \eqref{Struc} are real. The algebra \eqref{NC-BI} differs from the standard Bannai--Ito algebra \eqref{BI-Algebra} by a change of sign in the anticommutation relations and can thus be viewed  as a non-compact form of the latter. The difference between  \eqref{BI-Algebra} and \eqref{NC-BI}  is similar in spirit to the difference that exists between $\mathfrak{su}(2)$ and $\mathfrak{su}(1,1)$. The Casimir operator $Z$ for the algebra \eqref{NC-BI} is naturally given by
\begin{align*}
Z=A_1^2-A_2^2-A_3^2.
\end{align*}
In the realization \eqref{rea-3}, it takes the same value as in \eqref{Cas-Value}. 

In light of the eigenvalue equation \eqref{Q-Eigen} and the recurrence relation \eqref{Q-Recu}, one can straightforwardly introduce a four-parameter family of infinite-dimensional, self-adjoint representations of the algebra \eqref{NC-BI}. This is done in the following proposition.
\begin{prop}
Let $\alpha,\beta,\gamma,\delta$ be positive real numbers and consider the infinite-dimensional vector space  $V$ spanned by the basis vectors $e_{n}$, $n=0,1,\ldots$, endowed with the actions
\begin{align}
\label{Actions}
\begin{aligned}
A_1\,e_{n}&=(-1)^{n}(n+2\alpha+2\gamma+3/2)\,e_{n},
\\
A_2\,e_{n}&=\sqrt{u_{n+1}}\,e_{n+1}+c_{n}\,e_{n}+\sqrt{u_{n}}\,e_{n-1},
\end{aligned}
\end{align}
where $u_{n}$ and $c_{n}$ are given by \eqref{Cn} and \eqref{Un}, respectively. Then $V$ supports an infinite-dimensional, irreducible, self-adjoint representation of the non-compact Bannai--Ito algebra \eqref{NC-BI}.
\end{prop}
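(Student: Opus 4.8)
The plan is to realize the abstract actions \eqref{Actions} concretely as the matrix representation of the operators $\mathcal{M}$ and $X$ in the orthonormal basis of modified Bannai--Ito polynomials, and then to verify in turn the three asserted properties: that $V$ carries a representation of \eqref{NC-BI}, that it is self-adjoint, and that it is irreducible. First I would set $e_n$ equal to the orthonormalized version of the monic polynomials $\mb{Q}_n(x;\mf{a},\mf{b},\mf{c},\mf{d})$ of \eqref{Q-Recu}, obtained by dividing by the norm (proportional to $\sqrt{\prod_{k=1}^n u_k}$). Under the parametrization \eqref{para-1} one has $\mf{a}+\mf{b}+\mf{c}+\mf{d}=2\alpha+2\gamma$, so the eigenvalue equation \eqref{Q-Eigen} gives $\mathcal{M}\,\mb{Q}_n=(-1)^n(n+2\alpha+2\gamma+3/2)\mb{Q}_n$, which is exactly the diagonal action of $A_1$ in \eqref{Actions}. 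Likewise, orthonormalization turns the monic recurrence \eqref{Q-Recu} into its symmetric tridiagonal form, so that multiplication by $x$ sends $e_n$ to $\sqrt{u_{n+1}}\,e_{n+1}+c_n\,e_n+\sqrt{u_n}\,e_{n-1}$, matching the action of $A_2$; the positivity $u_n>0$ of \eqref{Un} for $\alpha,\beta,\gamma,\delta>0$ guarantees the square roots are real. Since $\mathcal{M}=A_1$ and $X=A_2$ already satisfy the non-compact Bannai--Ito relations \eqref{NC-BI} as operators on $\mathrm{span}\{\mb{Q}_n\}$, their matrix elements in the basis $\{e_n\}$ obey the same relations; defining $A_3$ by the first relation of \eqref{NC-BI}, the remaining two relations then hold automatically, so $V$ indeed supports a representation.

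Next, for self-adjointness I would endow $V$ with the inner product making $\{e_n\}$ orthonormal. Under the conditions \eqref{CC}, realized through \eqref{para-1}, the eigenvalues $\lambda_n$ are real, the coefficients $c_n$ of \eqref{Cn} are real, and $u_n>0$ in \eqref{Un}. Hence $A_1$ is represented by a real diagonal matrix and $A_2$ by a real symmetric tridiagonal matrix, so both are self-adjoint. Since the structure constants \eqref{Struc} are also real under \eqref{CC}, the operator $A_3=\{A_1,A_2\}-\alpha_3$ is an anticommutator of self-adjoint operators shifted by a real scalar, and is therefore self-adjoint as well.

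Finally, the genuine argument lies in irreducibility. I would first observe that the eigenvalues $\lambda_n=(-1)^n(n+2\alpha+2\gamma+3/2)$ are pairwise distinct: at even $n$ they are positive and strictly increasing, at odd $n$ they are negative and strictly decreasing, and the two sets are disjoint because $2\alpha+2\gamma+3/2>0$. Thus $A_1$ has simple spectrum, and any $A_1$-invariant subspace $W\subseteq V$ must be spanned by a subset $\{e_n:n\in S\}$ of the eigenbasis. If such a $W$ is in addition $A_2$-invariant and $n\in S$, then $A_2 e_n=\sqrt{u_{n+1}}\,e_{n+1}+c_n\,e_n+\sqrt{u_n}\,e_{n-1}\in W$ forces $n+1\in S$, and $n-1\in S$ for $n\ge 1$, because the off-diagonal coefficients $\sqrt{u_{n+1}}$ and $\sqrt{u_n}$ are nonzero. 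Starting from any element of the nonempty set $S$, this propagates upward and downward to all indices, so $S=\{0,1,2,\ldots\}$ and $W=V$. I expect the main obstacle to be not any single computation but rather making the passage from the operator identities for $\mathcal{M}$ and $X$ to infinite matrices precise on the algebraic span of the $e_n$ (where all actions are finite linear combinations, so domain issues do not arise); once the representation is realized this concretely, the reality checks for self-adjointness and the simple-spectrum argument for irreducibility are routine.
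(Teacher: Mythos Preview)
Your proposal is correct and follows essentially the same route as the paper: the relations \eqref{NC-BI} are inherited from the operator realization \eqref{rea-3} (the paper phrases this as ``direct calculation''), self-adjointness follows from the reality of $c_n$ and the positivity of $u_n$, and irreducibility from $u_n\neq 0$. Your argument is more explicit than the paper's terse proof---in particular, your observation that $A_1$ has simple spectrum, so that any invariant subspace is spanned by a subset of the $e_n$, supplies the justification the paper leaves implicit when it attributes irreducibility solely to the nonvanishing of the off-diagonal entries.
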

\begin{proof}
One can verify by direct calculation that with the actions \eqref{Actions}, the generators $A_1$, $A_2$ satisfy the relations \eqref{NC-BI}. The fact that the representation is self-adjoint follows from the observation that for positive values of $\alpha,\beta,\gamma,\delta$, the coefficients $c_{n}$ are real and the coefficients $u_{n}$ are positive for all $n=0,1,\ldots$. The irreducibility is a consequence of the fact that under these conditions, $u_{n}\neq 0$ for all non-negative integers $n$.
\end{proof}
\section{Conclusion}
In this paper, we have established the relationship between the Bannai--Ito polynomials and the non-symmetric Wilson polynomials as well as between the Bannai--Ito algebra and a rank-one degenerate double affine Hecke algebra associated with the root system $(C_1^{\vee}, C_1)$. We have also exhibited a positive-definite, continuous measure for the BI polynomials, displayed the associated non-compact form of the BI algebra and presented a family of irreducible representations.

The results presented suggest several questions of interest. First, given the applications of the one-variable Bannai--Ito polynomials to exactly solvable models (e.g. \cite{2015_Genest&Vinet&Zhedanov_CommMathPhys_336_243,2015_DeBie&Genest&Vinet_ArXiv_1501.03108}), it would be of interest to study the multivariate non-symme\-tric Wilson polynomials introduced in \cite{2009_Groenevelt_SelMath_15_377} from that perspective; the results of such an investigation should be compared for example with those found in \cite{1995_VanDiejen_JPhysA_28_369} and \cite{2013_VanDiejen&Emsiz_JFunAnal_265_1981}. Second, in view of the fact that the Bannai--Ito polynomials $\mb{B}_{n}(x)$ obeying a discrete and finite orthogonality relation arise as Racah coefficients for positive-discrete series representations of $\mf{osp}(1|2)$ \cite{2014_Genest&Vinet&Zhedanov_ProcAmMathSoc_142_1545}, it would be natural to look for a similar interpretation for the modified Bannai--Ito polynomials $\mb{Q}_{n}(x)$, which satisfy a continuous orthogonality relation. This would possibly involve different types of representations, other than those of the positive-discrete series; see \cite{2006_Groenevelt_ActaApplMath_91_133}
\section*{Acknowledgments}\noindent
The authors would like to thank S. Tsujimoto and W. Groenevelt for discussions. VXG is supported by the Natural Science and Engineering Research Council of Canada (NSERC). The research of LV is supported in part by NSERC. AZ would like to thank the Centre de recherches math\'ematiques (CRM) for its hospitality.

\end{document}